
\documentclass[final]{siamltex}

\usepackage{algorithmic,algorithm}
\usepackage[leqno]{amsmath}
\usepackage{amsfonts, amssymb, amscd, mathrsfs}
\usepackage{accents}
\usepackage{bm}
\usepackage[pdftex]{graphicx}

\newcounter{remark-count}
\newenvironment{remark}[1][Remark]{ \refstepcounter{remark-count} \begin{trivlist} \item[\hskip \labelsep{ \hskip 0.5cm  \itshape #1 \thesection.\arabic{remark-count}.}]   }{\end{trivlist}}

\numberwithin{remark-count}{section}
\numberwithin{algorithm}{section}



\title{Combined Preconditioning with Applications in
  Reservoir Simulation}


\author{ 
  Xiaozhe Hu \thanks{Department of Engineering Mechanics, Kunming University of Science and Technology, China; Department of Mathematics, The Pennsylvania
    State University, University Park, PA 16802. Email:
    \texttt{hu\_x@math.psu.edu}} 
 \and Shuhong Wu \thanks{RIPED, PetroChina Company Limited, China.
 Email: \texttt{wush@petrochina.com.cn}}
\and Xiao-Hui Wu \thanks{ExxonMobil
    Upstream Research Company, USA. Email:
    \texttt{xiao-hui.wu@exxonmobil.com}} 
\and Jinchao Xu
  \thanks{Department of Mathematical Sciences, The Pennsylvania State
    University, University Park, PA 16802. Email:
    \texttt{xu@math.psu.edu}. The research of this author was supported in part by NSF Grant DMS-0915153, DOE Grant DE-SC0006903, LLNL Subcontract B596235, and NSFC Grant 91130011/A0117.} 
\and Chen-Song Zhang \thanks{NCMIS \& LSEC, Academy of Mathematics and System Sciences, Beijing, China 100190. Email: \texttt{zhangcs@lsec.cc.ac.cn}. This author is supported by the Dean Startup Fund of NCMIS, Academy of Mathematics and System Sciences.} 
\and Shiquan Zhang
  \thanks{Department of Mathematics, Sichuan University, China. Email:
    \texttt{shiquanz3@gmail.com}} 
\and Ludmil Zikatanov
  \thanks{Department of Mathematics, The Pennsylvania State
    University, University Park, PA 16802. Email:
    \texttt{ltz@math.psu.edu}} }

\begin{document}

\maketitle

\begin{abstract}
We develop a simple algorithmic framework to solve large-scale symmetric positive definite linear systems.  At its core, the framework relies on two components: (1) a norm-convergent iterative method (i.e. smoother) and (2) a preconditioner.  The resulting preconditioner, which we refer to as a combined preconditioner, is much more robust and efficient than the iterative method and preconditioner when used in Krylov subspace methods.  We prove that the combined preconditioner is positive definite and show estimates on the condition number of the preconditioned system.  We combine an algebraic multigrid method and an incomplete factorization preconditioner to test the proposed framework on problems in petroleum reservoir simulation.  Our numerical experiments demonstrate noticeable speed-up when we compare our combined method with the standalone algebraic multigrid method or the incomplete factorization preconditioner.
\end{abstract}

\begin{keywords} 
Combined preconditioning, Multigrid method, Incomplete LU factorization, Reservoir Simulation
\end{keywords}

\begin{AMS}
65F08, 65F10, 65N55, 65Z05
\end{AMS}

\pagestyle{myheadings}
\thispagestyle{plain}

\section{Introduction} \label{sec:intro} 
We consider a sparse linear system of equations that arises from the discretizations of the elliptic partial differential equations (PDEs) used to simulate the physical processes in petroleum reservoirs.  The Petroleum Reservoir Simulation (PRS) is a tool for predicting hydrocarbon reservoir performance under various operating regimes. PRS helps engineers to obtain information pertaining to the processes that take place within oil reservoirs -- information that can be used to maximize recovery and minimize environmental damage.  The crucial aspect of PRS is its ability to solve large-scale discretized PDEs, which are strongly coupled, indefinite and often non-symmetric.  Solving these linear systems consumes most of the computational time in all modern reservoir simulators -- more than 75\% of the computational time in general.  Furthermore, the demand for more accurate simulations has led to larger discrete reservoir models. And, this increase in model size results in larger linear systems, that are more difficult, or even impossible to solve in an acceptable amount of time using standard direct or standard iterative solvers. 

Over the last 30 years,  incomplete LU factorization (ILU) has become one of the most commonly used methods for solving large sparse linear equations arising in PRS. First developed in the 1960s~\cite{Buleev.N1960, Varga.R1962}, ILU methods provide an approximation of the exact LU factorization (computed via Gaussian elimination) by specifying the sparsity of the $L$ and $U$ factors. These methods need not be convergent when used in linear iterative procedures, but they do provide preconditioners that are used in Krylov subspace methods.  Thought to have introduced the word \emph{preconditioning}, Evans \cite{Evans.D1968} may also have been the first to use ILU as a preconditioner.  As noted, due to their simplicity, ILU methods are of particular interest to researchers in the field of reservoir simulation. However, when ILU preconditioners are applied to problems in PRS, their performance usually deteriorates as the number of grid-blocks increases.

To solve discretized scalar elliptic PDEs (Poisson-like PDEs), multigrid (MG) methods are efficient and provide a solution in optimal time and memory complexity in cases that allow standard coarse spaces to be used (see \cite{Hackbusch.W1985, Xu.J1992, Bramble.J1993,Trottenberg.U;Oosterlee.C;Schuller.A2001} and references therein for details).  However, the multigrid methods with standard coarse spaces make extensive use of the analytic information from the discretized equation and geometric information explicitly related to the discretization. This makes such methods difficult to use, such that in practice more sophisticated methods, such as algebraic multigrid (AMG) methods are preferred.  There are many different types of AMG methods -- the classical AMG (\cite{Stuben.K1983,
  Brandt.A;McCormick.S;Ruge.J1985}), smoothed aggregation AMG
(\cite{Vanek.P;Mandel.J;Brezina.M1996,
  Brezina.M;Falgout.R;MacLachlan.S;Manteuffel.T;McCormick.S;Ruge.J2005}),
AMGe (\cite{Jones.J;Vassilevski.P2001,Lashuk.I;Vassilevski.P2008}),
etc. However, despite their differences, they generally do not require geometric information from the grids.  Interested readers can refer to \cite{Ruge.J;Stuben.K1987, Wagner.C1999, Brandt.A2000, Cleary.A;Falgout.R;Henson.V;Jones.J;Manteuffel.T;McCormick.S;Miranda.G;Ruge.J2000, Falgout.R2006, 2008VassilevskiP-aa} and references therein for details on AMG methods.  Due to their efficiency, scalability, and applicability, AMG methods have become increasingly popular in practice (see e.g., \cite{Trottenberg.U;Clees.T2001}) including in modern petroleum reservoir simulations~\cite{Stuben.P;Chmakov.S2003,Clees.T;Ganzer.L.2010a,Dubois.O;Mishev.I;Zikatanov.L2009,Jiang.Y;Tchelepi.H2009,Hu.X;Liu.W;Qi.G;Xu.J;Yan.Y;Zhang.C2011a}.  However, the performance and efficiency of MG methods may degenerate as the physical and geometric properties of the problems become more complex. In such circumstances,  there is an increase in AMG-setup time (the time needed to construct coarse spaces and operators) and superfluous fill-in in the coarse grid operators, which makes applying relaxation more expensive on coarser levels.


In this paper, we provide a simple and transparent framework for combining preconditioners like ILU or Jacobi or additive Schwarz preconditioners with AMG or another norm-convergent iterative method for large-scale symmetric positive definite linear system of equaitons. In the combined preconditioners we propose, the component provided by the norm-convergent iterative method need not be very effective when used alone.  However, as we shall see, when it is paired with an existing preconditioner (such as an ILU preconditioner), the result is an efficient method. We show that the combined preconditioner is positive definite (SPD) under some mild assumptions. This guarantees the convergence of the conjugate gradient (CG) method for the preconditioned system.  We apply the combined preconditioner to petroleum reservoir simulations that involve highly heterogeneous media and unstructured grids in three spatial dimensions (with faults and pinch-outs). The numerical results show an improved performance that justifies using the combined preconditioners in PRS.

The rest of the paper is organized as follows. The algorithmic framework is introduced and analyzed in Section~2. As an example in Section~3, we formally present the combined preconditioner by describing its essential components, e.g., an ILU preconditioner and an AMG method. In Section~4, we recall the mixed-hybrid finite element method used to provide a discretization of an important component of reservoir simulation -- an elliptic PDE with heterogenous coefficients. Numerical results showing the improvement in performance are presented in Section~5, and some concluding remarks are given in Section~6.

\section{Definition of the combined preconditioner}
Consider the linear system
\begin{equation}\label{eqn:linear_system}
Au=f,
\end{equation}
with an SPD coefficient matrix $A$. Let $(\cdot, \cdot)$ be an inner product on a finite-dimensional Hilbert space $V$; its induced norm is $\| \cdot \|$. The adjoint of $A$ with respect to $(\cdot, \cdot)$, denoted by $A^T$, is defined by $(Au,v) = (u,A^Tv)$ for all $u, v\in V$.  $A$ is SPD if $A^T=A$ and $(Av,v) > 0$ for all $v \in V\backslash\{0\}$.  As $A$ is SPD with respect to $(\cdot,\cdot)$, the bilinear form $(A\cdot,\cdot)$ defines an inner product on $V$, denoted by $(\cdot,\cdot)_{A}$, and the induced norm of $A$ is denoted by $\| \cdot \|_{A}$. 

In what follows, we also need an operator $S$, referred to as a smoother. In some of the results, we distinguish two cases: (1) \emph{norm-convergent smoother}, i.e., $(I-SA)$ is a contraction in
$\|\cdot\|_A$-norm; and (2) more generally, a \emph{non-expansive smoother}, i.e., $(I-SA)$ is a non-expansive operator in $\|\cdot\|_A$ norm.  Note that \emph{norm-convergent} implies \emph{non-expansive}.

Next, in~Algorithm~\ref{alg:barB}, we combine a non-expansive iterative method $S$ with an existing SPD preconditioner~$B$, and as a result we get a preconditioner $B_{\text{co}}$.

\begin{algorithm}
\caption{}\label{alg:barB}
\flushleft{Given $S$, $B$, and $u^{k,0}=u^k$, the new iterate $u^{k+1}$ is
obtained by the following steps: }
\begin{equation*}
\begin{array}{rrcllrrcl}
   (1) & u^{k,1} &=& u^{k,0} + S (f - Au^{k,0}); 
& (2) & u^{k,2} &=& u^{k,1} + B (f - Au^{k,1});\\
   (3)  & u^{k+1} &=& u^{k,2} + S^T(f - Au^{k,2}). &&&&
\end{array}
\end{equation*}
\end{algorithm}
It is easy to see that Algorithm~\ref{alg:barB} defines a linear operator $B_{\text{co}}$ and that
\begin{equation} \label{eqn:barB}
I - B_{\text{co}}A = (I-S^TA)(I-BA)(I-SA).
\end{equation}
From~\eqref{eqn:barB}, it follows that
\begin{align}
B_{\text{co}}  & = \widetilde{S} + (B - S^TAB - BAS + S^TABAS) \nonumber
\\
	& = \widetilde{S} + (I - S^TA) B (I - AS). \label{eqn:tildeB1}
\end{align}
Here $\widetilde{S}$ is the symmetrization of $S$, defined as
\begin{equation*}
I - \widetilde{S}A = (I - S^TA)(I-SA),\quad\quad\widetilde{S}=
S+S^T-S^TAS. 
\end{equation*}
We now explain the properties pertaining to the notions of \emph{norm-convergent} and
\emph{non-expansive} smoothers, which we refer to in the subsequent sections. 
\begin{itemize}
\item \textbf{Norm-convergent smoother}.  In this case, $(I-SA)$ is a contraction in $\|\cdot\|_A$-norm; i.e., there exists $\rho \in[0,1)$, such that for all $v\in V$ we have
\begin{equation}\label{eq:rho}
\| (I - SA) v \|^{2}_{A} \leq \rho \| v \|^{2}_{A}.
\end{equation}
Note that under this assumption, $S$ is invertible. Indeed, we have,
$$
\|SAv\|_A=\|(I-(I-SA))v\|_A\ge \|v\|_A-\|(I-SA)v\|_A\ge (1-\sqrt{\rho})\|v\|_A,  
$$ 
which means $SA$ is coercive and implies that $S$ is invertible. Recall also that~\eqref{eq:rho} holds if and only if the symmetrization $\widetilde{S}$ is SPD; i.e.,
$$
\|I-SA\|_A<\rho, \quad\mbox{if and only if}\quad (\widetilde{S}v,v)>0,
\quad\mbox{for all}\quad v\in V \setminus\{0\}. 
$$
\item \textbf{Non-expansive smoother}. This is a more general case in which we assume that
\begin{equation}\label{eq:norho}
\| (I - SA) v \|^{2}_{A} \leq \| v \|^{2}_{A} \quad \forall v \in V.
\end{equation}
One important difference between these two cases is this: with the non-expansive case, $\widetilde{S}$~can be positive \emph{semi}-definite, whereas in the norm-convergent case, $\widetilde{S}$ is positive definite. 
\end{itemize}
The following result shows that the operator $B_{\text{co}}$ is SPD and, therefore, can be employed as a preconditioner for the preconditioned conjugate gradient (PCG) method. A preliminary version of the following theorem can be found in \cite{Xu.J2010}.

\begin{theorem}\label{thm:B-is-SPD}
Assume that $S: V \rightarrow V$ is a non-expansive smoother.
Moreover, the operator $B: V \rightarrow V$ is SPD. Then, $B_{\text{co}}$ defined  in~\eqref{eqn:barB}~is SPD.
\end{theorem}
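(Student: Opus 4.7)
The plan is to work directly from the symmetrized expression
\begin{equation*}
B_{\text{co}} = \widetilde{S} + (I - S^TA)\,B\,(I - AS)
\end{equation*}
given in \eqref{eqn:tildeB1}. This avoids having to expand the triple product in \eqref{eqn:barB} and makes both symmetry and positivity transparent.

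First I would verify symmetry. Since $A$ is SPD we have $A^T=A$, so $\widetilde{S}^T = S^T + S - (S^TAS)^T = \widetilde{S}$, and $[(I-S^TA)B(I-AS)]^T = (I-AS)^T B^T (I-S^TA)^T = (I-S^TA)B(I-AS)$. Hence $B_{\text{co}}$ is symmetric.

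Next I would show $(B_{\text{co}}v,v)\ge 0$ for every $v\in V$ and establish strict positivity. Using $(I-AS)^T=(I-S^TA)$,
\begin{equation*}
(B_{\text{co}}v,v) = (\widetilde{S}v,v) + \bigl(B(I-AS)v,\,(I-AS)v\bigr).
\end{equation*}
The non-expansive assumption on $S$ is equivalent (as noted in the text) to $\widetilde{S}$ being positive \emph{semi}-definite, so the first term is $\ge 0$; the second term is $\ge 0$ because $B$ is SPD. Thus $B_{\text{co}}$ is at least positive semi-definite. To promote this to strict positive definiteness, suppose $(B_{\text{co}}v,v)=0$ for some $v$. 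Then both nonnegative summands vanish. Positivity of $B$ forces $(I-AS)v=0$, i.e.\ $ASv = v$. Substituting this into the expansion $(\widetilde{S}v,v) = 2(Sv,v) - (ASv,Sv) = 2(Sv,v) - (v,Sv) = (Sv,v)$ yields $(Sv,v)=0$. But $ASv=v$ and the invertibility of $A$ give $Sv = A^{-1}v$, whence $(Sv,v) = (A^{-1}v,v)$. Since $A^{-1}$ is SPD, this forces $v=0$.

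The main subtlety I expect is precisely in the last step: because we only assume $S$ is non-expansive (not norm-convergent), $\widetilde{S}$ may have a nontrivial kernel, so we cannot conclude $v=0$ from $(\widetilde{S}v,v)=0$ alone. The trick is that the two vanishing conditions reinforce each other: the constraint $ASv=v$ coming from the second term collapses the quadratic form $(\widetilde{S}v,v)$ to $(Sv,v)$, which combined with $Sv=A^{-1}v$ and the positivity of $A^{-1}$ closes the argument. Everything else is a routine application of $A^T=A$ and the definition of $\widetilde{S}$.
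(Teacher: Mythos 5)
Your proof is correct, and it takes a slightly different route from the paper's. The paper works with the factored form \eqref{eqn:barB} in the $A$-inner product: it sets $\tilde x=(I-SA)x$, computes $((I-B_{\text{co}}A)x,x)_A=\|\tilde x\|_A^2-(A\tilde x,A\tilde x)_B$, and splits on whether $\tilde x=0$ (giving $((I-B_{\text{co}}A)x,x)_A=0<\|x\|_A^2$) or $\tilde x\neq 0$ (where positivity of $B$ and non-expansiveness give $((I-B_{\text{co}}A)x,x)_A<\|\tilde x\|_A^2\le\|x\|_A^2$); either way $(B_{\text{co}}Ax,Ax)>0$, and surjectivity of $A$ finishes. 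You instead start from the additive form \eqref{eqn:tildeB1}, observe that $(B_{\text{co}}v,v)$ is a sum of two nonnegative quadratic forms (non-expansiveness being used exactly as the positive semi-definiteness of $\widetilde S$), and then do a kernel analysis when both vanish. The two case splits correspond under $v=Ax$, since $(I-AS)A=A(I-SA)$, so the underlying mechanism is the same: when the smoother contributes nothing, the constraint $ASv=v$ forces the $\widetilde S$-term to equal $(A^{-1}v,v)>0$. Your computation $(\widetilde Sv,v)=2(Sv,v)-(ASv,Sv)=(Sv,v)=(A^{-1}v,v)$ under $ASv=v$ is a correct and self-contained way to close that case. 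What your version buys is that symmetry and semi-definiteness are immediate from \eqref{eqn:tildeB1} (the paper only gestures at symmetry), and the exact locus where strict positivity could fail is isolated cleanly; what the paper's version buys is a direct tie to the norm estimate $\|(I-SA)x\|_A\le\|x\|_A$, which is the form of the hypothesis reused in the condition-number analysis of Theorem 2.2.
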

\begin{proof}
For any $x \in V$, $x\neq 0$, we define $\tilde x := (I-SA)x$. Thus, we obtain
\begin{eqnarray*}
((I-B_{\text{co}}A)x,x)_{A} & = &((I-BA)(I-SA)x, (I-SA)x)_A   \\
			   & = & ((I-BA) \tilde x, \tilde x)_A =
                           (\tilde x, \tilde x)_A - (A\tilde x,A\tilde x)_B
\end{eqnarray*}
where $(\cdot, \cdot)_B$ is an inner product on $V$ defined as $(\cdot, \cdot)_B:=(B\cdot, \cdot)$ since $B$ is SPD. 

Further, if $\tilde x = 0$, then $((I-B_{\text{co}}A)x, x) _A = 0< \|x\|_A^2$.  On the other hand, for $\tilde x \neq 0$, we obtain from $(A\tilde x,A\tilde x)_B > 0$ that
\begin{equation*}
\|x\|_A^2-(B_{\text{co}}Ax,x)_A = 
((I-B_{\text{co}}A)x,x)_A <  \|\tilde x\|_A^2 \le \|x\|_A^2,
\quad\Longrightarrow\quad
(B_{\text{co}}Ax,Ax) > 0.
\end{equation*}
As $x\in V$ is an arbitrary nonzero element, and $A$ is non-singular, we conclude that $B_{\text{co}}$ is SPD.  Finally, in view of \eqref{eqn:tildeB1} and because both $A$ and $B$ are symmetric, it is easy to see that $B_{\text{co}}$ is symmetric. This completes the proof.
\end{proof}

\begin{remark}
Note that the assumption in the theorem is that the error transfer for the convergent iteration $I-SA$ is non-expansive in the $A$ norm and does not require $I-SA$ to be a contraction. 
\end{remark}
\begin{remark} \label{rem:checkB} Theorem~\ref{thm:B-is-SPD}~shows that the operator $B_{\text{co}}$ is SPD.  Hence, we can use the PCG method to solve~\eqref{eqn:linear_system}~with $B_{\text{co}}$ as a preconditioner, and in exact arithmetic, PCG method will always be convergent.  Algorithm~\ref{alg:barB}~is relatively simple, but the order in which $S$ and $B$ are applied is crucial to the positive definiteness  property of $B_{\text{co}}$.  For example, consider the operator $\overline{B}$ defined by
\begin{equation} \label{eqn:tildeB}
I - \overline{B}A = (I-BA)(I-\widetilde{S}A)(I-BA).
\end{equation}
In this case, it is not true in general that $\overline{B}$ is positive definite.  For example, if we use the ILU method to define a preconditioner $B$, finding the right scaling to ensure the positive definiteness of $\overline{B}$ could be a difficult task, and with indefinite $\overline{B}$ the PCG will not converge.
\end{remark}

\begin{remark} \label{rem:alg_iter} Note that directly using Algorithm~\ref{alg:barB}~as an iterative method may not result in a convergent method. This is because though we assume that $B$ is only SPD, we do not assume that $I-BA$ is a contraction.  The contraction property of $I-BA$ (and also of $I-B_{\text{co}}A$) is that any eigenvalue of $BA$ satisfies $0<\lambda(BA)\leq \omega < 2$.  However, we do not assume that such a contraction property holds for $B$.
\end{remark}


\subsection{Other derivations of the combined preconditioner}
Here, we present two distinct points of view that can lead to algorithms such as Algorithm~\ref{alg:barB}. One is from the point of view of the fictitious or auxiliary space techniques, developed in \cite{Nepomnyaschikh.S1992}~and~\cite{Xu.J1996}, and another is from the point of view of the block factorization techniques's, developed in~\cite[Chapter~5]{2008VassilevskiP-aa}.  Below, we derive Algorithm~\ref{alg:barB} using these techniques. Of course, all three derivations lead to one and the same method and  they are equivalent to each other.  However, they present different points of view and help us understand different aspects of the method.

\subsubsection{Derivation via the auxiliary space method}  
An additive version of the Algorithm~\ref{alg:barB} was derived via fictitious or auxiliary space techniques developed in \cite{Nepomnyaschikh.S1992}~and~\cite{Xu.J1996}.  Equivalently, Algorithm~\ref{alg:barB} can be viewed as a successive or multiplicative version of the auxiliary space additive preconditioner described below. Here, we use only one auxiliary space, which will turn out to be the same as $V$, but with a different inner product.  We define
\begin{equation}\label{eqn:aux}
 \overline V = V \times W_{1},
\end{equation}
where $W_{1}$ is an auxiliary (Hilbert) space with an inner product
$$
\overline a_{1}(\cdot,\cdot)=(\cdot,\cdot)_{A_1}.
$$
We also introduce an operator $\Pi_{1}:W_{1}\mapsto V$ in order to define the additive preconditioner, and define the latter as follows:
\begin{equation} \label{eqn:Ba}
\widehat{B} = \widetilde{S} + \Pi_{1}  A_{1}^{-1}\Pi_{1}^{T}.
\end{equation}

A distinctive feature of the auxiliary space method is the presence of $V$ in \eqref{eqn:aux} as a component of $\overline V$ and the presence of the symmetric positive definite operator $\widetilde{S}: V\mapsto V$.  $\widetilde{S}$ is assumed to be SPD in order to guarantee that $\widehat{B}$ is SPD and can be applied as a preconditioner for the PCG method.  It can be proved that the condition number of the preconditioned system can be bounded as follows~(see \cite{Xu.J1996}):
\begin{equation}\label{eqn:fasp}
\kappa(\widehat{B}A) \leq c_{0}^{2}(c_{s}^{2}+c_{1}^{2}), 
\end{equation}
if 
$$
\|\Pi_{1}w_{1}\|^2_A \leq c_{1}^2 \|w_{1}\|^2_{A_1}, \qquad w_{1}\in W_{1},
$$
and 
$$
(\widetilde{S}Av,v)_A \leq c_{s}^2(v,v)_A, \qquad  \forall v \in V.
$$
Moreover, for each $v\in V$, there exists $w_{1}\in W_{1}$ such that 
$$
v=v_{0}+\Pi_{1}w_{1}, \quad v_0 = v-\Pi_{1}w_{1}, 
\quad \text{and} \quad
(\widetilde{S}Av_{0},v_{0})_A + (w_{1},w_{1})_{A_1} \leq c_{0}^{2} (v,v)_A.
$$
Taking $W_1=V$ but with the inner product defined by $B$, $A_1^{-1}=B$, and $\Pi = I$, gives the additive version $\widehat{B}$ of the preconditioner $B_{\text{co}}$. Note that when $W_1=V$, $B$ is SPD and $\widetilde{S}$ is the symmetrization of the smoother $S$,  the preconditioner $\widehat{B}$ is SPD for both the norm-convergent smoother and the non-expansive smoother. 

\subsubsection{Derivation via block factorization}  
In this section, we present a derivation following the lines in~\cite[Chapter~5]{2008VassilevskiP-aa}. Let us introduce the operator
$\accentset{\bm{=}}{B}: V\times V\mapsto V\times V$ in block factored form:
\[
\accentset{\bm{=}}{B}=
\left(
\begin{matrix}
I & -S^TA\\
0 & I
\end{matrix}
\right)
\left(
\begin{matrix}
\widetilde{S}  & 0\\
0 & B
\end{matrix}
\right)
\left(
\begin{matrix}
I & 0\\
-AS & I
\end{matrix}
\right).
\]
We then set
\begin{equation}\label{eqn:barbarB}
\accentset{\bm{\approx}}{B}=
\begin{matrix}
& \\
(I,& I)\\ & 
\end{matrix}
\accentset{\bm{=}}{B}
\left(
\begin{matrix}
I \\I
\end{matrix}
\right), \qquad \accentset{\bm{\approx}}{B}:V \mapsto V.
\end{equation}
A straightforward calculation shows that $B_{\text{co}}$ and $\accentset{\bm{\approx}}{B}$ are the same. 
Regarding the positive definiteness of $\accentset{\bm{\approx}}{B}$, note that for the norm-convergent smoother, it is immediately evident that $\accentset{\bm{\approx}}{B}$ is SPD, because both $B$ and $\widetilde{S}$ are SPD.  For the non-expansive smoother, the positive-definiteness of $\accentset{\bm{\approx}}{B}$ does not immediately follow from the form of the preconditioner given in~\eqref{eqn:barbarB}. The arguments of Theorem~\ref{thm:B-is-SPD} (or similar) are needed to conclude that $\accentset{\bm{\approx}}{B}$ is SPD.

\subsection{Effectiveness of $B_{\text{co}}$\label{sect:compar}}
In this section, we show that the combined preconditioner, under suitable scaling assumptions performs no worse than its components.  As the numerical tests show (see Section~\ref{sec:numer}), the combination of ILU (for $B$) and AMG (for $S$) performs significantly better than its components. Let us set 
\begin{equation}\label{eq:m1m0def}
m_{1} = \lambda_{\max}(BA) \quad \mbox{and} \quad m_{0} =
\lambda_{\min}(BA). 
\end{equation}
Without loss of generality (with proper scaling), we can assume that the preconditioner $B$ is such that the following
inequalities hold:
\begin{equation}\label{eq:m1m0}
m_1 > 1 \geq m_0 > 0.
\end{equation}
We now prove a result that compares $\kappa(B_{\text{co}}A)$ with $\kappa(\widetilde{S}A)$ and $\kappa(BA)$ under the assumptions that $B$ and $S$ are such that both~\eqref{eq:m1m0} and \eqref{eq:rho} are satisfied.
\begin{theorem}\label{thm:compare}
If $S$ is a norm-convergent smoother and $B_{\text{co}}$ is defined as in~\eqref{eqn:barB}, then\begin{equation} \label{ine:cond}
\kappa(B_{\text{co}}A) \leq \frac{(1-m_1)(1-\rho)+m_1}{(1-m_0)(1-\rho)
  + m_0},
\end{equation}
and
\begin{equation}
\kappa(B_{\text{co}}A) < \kappa(BA).  \label{ine:better_B}
\end{equation}
Furthermore, if $S$ is such that~\eqref{eq:rho} holds with $\rho \geq 1 - \frac{m_0}{m_1 -1}$, then
\begin{equation}
\kappa(B_{\text{co}}A) \leq \kappa(\widetilde{S}A).  \label{ine:better_S}
\end{equation}
\end{theorem}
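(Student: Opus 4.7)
My plan is to reduce everything to Rayleigh-quotient estimates for $B_{\text{co}}A$, which is self-adjoint in the $A$-inner product $(\cdot,\cdot)_A$ by Theorem~\ref{thm:B-is-SPD}. The single key identity, obtained from \eqref{eqn:barB} by setting $\tilde x := (I-SA)x$ and using that $I-S^TA$ is the $A$-adjoint of $I-SA$, is
\[
((I - B_{\text{co}}A)x, x)_A \;=\; ((I - BA)\tilde x, \tilde x)_A.
\]
All three inequalities in the statement will be read off from this identity together with the spectral bounds on $BA$ and the contraction estimate on $I-SA$.

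For \eqref{ine:cond}, I bound the right-hand side using $m_0 \le \lambda(BA) \le m_1$ (available because $BA$ is $A$-self-adjoint) and $\|\tilde x\|_A^2 \le \rho\|x\|_A^2$. The crucial computational point is that $1-m_1<0$ while $1-m_0\ge 0$ by \eqref{eq:m1m0}, so the two-sided estimate $(1-m_1)\|\tilde x\|_A^2 \le ((I-BA)\tilde x,\tilde x)_A \le (1-m_0)\|\tilde x\|_A^2$ combines with $\|\tilde x\|_A^2 \le \rho\|x\|_A^2$ in opposite directions, producing $(1-m_1)\rho\,\|x\|_A^2 \le ((I-B_{\text{co}}A)x,x)_A \le (1-m_0)\rho\,\|x\|_A^2$. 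Rearranging via $(B_{\text{co}}Ax,x)_A = \|x\|_A^2 - ((I-B_{\text{co}}A)x,x)_A$ gives $1-(1-m_0)\rho \le \lambda(B_{\text{co}}A) \le 1-(1-m_1)\rho$, which is \eqref{ine:cond} after recognising that $1-(1-m_i)\rho = (1-m_i)(1-\rho) + m_i$ for $i=0,1$.

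The remaining two inequalities are elementary algebraic consequences of the bound established above. For \eqref{ine:better_B}, cross-multiplying the candidate $\frac{1-(1-m_1)\rho}{1-(1-m_0)\rho} < \frac{m_1}{m_0}$ and cancelling the common $m_0 m_1 \rho$ term collapses the statement to $m_0(1-\rho) < m_1(1-\rho)$, which holds since $m_0 < m_1$ and $\rho < 1$. For \eqref{ine:better_S}, I rely on the standard identification $\kappa(\widetilde{S}A) = 1/(1-\rho)$, which holds in the usual case $\lambda_{\max}(\widetilde{S}A)=1$ and $\lambda_{\min}(\widetilde{S}A)=1-\rho$ (the latter follows from $I-\widetilde{S}A = (I-S^TA)(I-SA)$ when $\rho$ is the sharp contraction constant). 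The desired inequality reduces to $\frac{1-(1-m_1)\rho}{1-(1-m_0)\rho} \le \frac{1}{1-\rho}$; clearing denominators and simplifying collapses this to $(m_1-1)(1-\rho) \le m_0$, precisely the hypothesis $\rho \ge 1 - m_0/(m_1-1)$.

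The main obstacle is confined to the first step: carefully tracking the signs of $1-m_0$ and $1-m_1$ so that the contraction bound $\|\tilde x\|_A^2 \le \rho\|x\|_A^2$ is applied in the correct direction on each side of the two-sided estimate. Once the sharpened eigenvalue bounds for $B_{\text{co}}A$ are in hand, \eqref{ine:better_B} and \eqref{ine:better_S} reduce to routine algebra.
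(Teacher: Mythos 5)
Your proof is correct and follows essentially the same route as the paper's: your identity $((I-B_{\text{co}}A)x,x)_A=((I-BA)\tilde x,\tilde x)_A$ is just the multiplicative restatement of the paper's additive decomposition $(B_{\text{co}}v,v)=(\widetilde{S}v,v)+(B(I-AS)v,(I-AS)v)$ from \eqref{eqn:tildeB1}, and your sign-tracking of $1-m_0\ge 0$ versus $1-m_1<0$ is exactly the step the paper performs to obtain the two-sided bound behind \eqref{ine:cond}. The concluding algebra for \eqref{ine:better_B} and \eqref{ine:better_S} is routine in both versions (the paper uses a mediant inequality where you clear denominators), and you share the paper's convention that $\kappa(\widetilde{S}A)=1/(1-\rho)$, which you at least flag explicitly.
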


\begin{proof}
  From the assumption stated in~\eqref{eq:m1m0}, we immediately conclude that $B$ is SPD and $\kappa(BA) = m_1 / m_0$.  By the
  definition of $\widetilde{S}$, we have
\begin{equation*}
0 \leq ((I-\widetilde{S}A)w,w)_{A} = ((I-SA)w,  (I-SA)w)_{A} = \| (I-SA)w \|^{2}_{A} \leq \rho \| w \|^{2}_{A},
\end{equation*}
where we have used the assumption of the convergence of $S$ in the last inequality. By choosing $v = Aw$, we can obtain
\begin{equation} \label{ine:cond-smoother}
(1 - \rho) (A^{-1}v,v) \leq (\widetilde{S}v,v) \leq (A^{-1}v,v). 
\end{equation}
On the other hand, as $m_{1} = \lambda_{\max}(BA)$ and $m_{0} = \lambda_{\min}(BA)$, we have
$$
m_0 (A^{-1}v,v) \leq (Bv,v)  \leq m_1(A^{-1}v,v).
$$
By the definition of $B_{\text{co}}$ \eqref{eqn:tildeB1}, we have
\begin{align*}
(B_{\text{co}}v,v) & = (\widetilde{S}v,v) + (B(I-AS)v,(I-AS)v)   \\
		  & \leq (\widetilde{S}v,v) + m_1(A^{-1}(I-AS)v,(I-AS)v) \\
		  & = (1-m_1) (\widetilde{S}v,v) + m_1(A^{-1}v,v).
\end{align*}
Similarly, we can derive that
\begin{align*}
(B_{\text{co}}v,v) & = (\widetilde{S}v,v) + (B(I-AS)v,(I-AS)v)   \\
		  & \geq (\widetilde{S}v,v) + m_0(A^{-1}(I-AS)v,(I-AS)v) \\
		  & = (1-m_0) (\widetilde{S}v,v) + m_0(A^{-1}v,v).
\end{align*}
As $m_1 > 1 \geq m_0 $, we have
\begin{equation*}
[(1-m_0)(1-\rho)+m_0](A^{-1}v,v) \leq (B_{\text{co}}v,v) \leq [(1-m_1)(1-\rho) + m_1](A^{-1}v,v);
\end{equation*}
then the condition number of $B_{\text{co}}A$ is bounded by 
\begin{equation*}
\kappa(B_{\text{co}}A) \leq \frac{(1-m_1)(1-\rho)+m_1}{(1-m_0)(1-\rho) + m_0}.
\end{equation*}
If $m_{1} >1$, then 
$$
(1-m_{1})(1-\rho) + m_{1} < m_{1},
$$ 
and if $1 \geq m_{0}>0$, then 
$$
(1-m_{0})(1-\rho)+m_{0} \geq m_{0}.
$$ 
Note that 
$$
\kappa(B_{\text{co}}A) \leq \frac{(1-m_1)(1-\rho)+m_1}{(1-m_0)(1-\rho) + m_0} < \frac{m_{1}}{m_{0}} =\kappa(BA).
$$
Hence, the inequality~\eqref{ine:better_B}~holds if $m_1 > 1 \geq m_0 > 0$. 

On the other hand, \eqref{ine:better_S}~follows from 
\begin{equation} \label{ine:eq_better_S}
\frac{(1-m_1)(1-\rho)+m_1}{(1-m_0)(1-\rho) + m_0} \leq \frac{1}{1-\rho} = \kappa(\widetilde{S}A),
\end{equation}
where the last equality comes from \eqref{ine:cond-smoother}. Note that
\begin{equation*}
\frac{(1-m_1)(1-\rho)+m_1}{(1-m_0)(1-\rho) + m_0} = \frac{1 + \rho(m_{1}-1)}{(1-\rho) + \rho m_{0}}.
\end{equation*} 
Note that $\frac{a+c}{b+d} \leq \frac{a}{b}$ if $\frac{c}{d} \leq \frac{a}{b}$ and $a,b,c,d > 0$.  Therefore,\eqref{ine:eq_better_S}~holds if $\frac{m_{1}-1}{m_{0}} \leq \frac{1}{1-\rho}$, i.e., $\rho \geq 1 - \frac{m_{0}}{m_{1}-1}$.
\end{proof}

\begin{remark}
If $\frac{m_{1}-1}{m_{0}} < \frac{1}{1-\rho}$, then \eqref{ine:better_S} becomes $\kappa(B_{\text{co}}A) < \kappa(\widetilde{S}A)$.
\end{remark}

\begin{remark} \label{rem:compare}
If either $\widetilde{S}$ or $B$ works well as a preconditioner itself, there is no need to use the more complicated $B_{\text{co}}$.  However, we are interested in cases in which neither $\widetilde{S}$ nor $B$ works effectively alone.  In these cases, we use $B_{\text{co}}$ to combine these two, and we use Theorem~\ref{thm:compare}~to guarantees that $B_{\text{co}}$ will be a better preconditioner than either $B$ or $\widetilde{S}$ under reasonable conditions. The condition 
$$
\rho \geq 1 - m_0/(m_1 -1)
$$
means that both $\widetilde{S}$ and $B$ yield slow convergence as
$$
\rho \geq 1 - m_0/(m_1 -1) \approx  1 - \kappa(BA)^{-1} \quad \text{if} \  m_1 >> 1.
$$ 
Therefore, if $B$ is not a good preconditioner, i.e., $\kappa(BA)$ is large, then $\rho \approx 1$, which implies that $\widetilde{S}$ does not work well either.  The new preconditioner $\widetilde B$ is no worse than $B$ or $\widetilde S$ alone. In fact, the new preconditioner may be capable of performing much better than either $B$ or $\widetilde{S}$ based on our numerical experiments (see Section~\ref{sec:numer}). 
\end{remark}

\section{A simple example: ILU+AMG}
Algorithm~\ref{alg:barB} provides an approach to combining ILU and AMG methods. In the combined use of ILU and AMG for the linear system~\eqref{eqn:linear_system}, AMG serves as $S$ and ILU serves as $B$.  Next, we specify our choice of ILU and AMG for problems specific to reservoir simulation.  However, we would like to emphasize that the combined preconditioner works for a wide range of iterative methods and preconditioners as long as they satisfy the assumptions in Theorem~\ref{thm:B-is-SPD}.

\subsection{Incomplete LU factorization}
Incomplete LU factorizations compute a sparse lower triangular matrix $L$ and a sparse upper triangular matrix $U$ so that the residual matrix $R = A - LU$ satisfies certain conditions. A general algorithm of ILU can be obtained by Gaussian elimination and dropping some of the elements in the off-diagonal positions.  There are many variants of ILU preconditioners. They differ in terms of  the rules that govern the prescribed fill-in of the factors during the ILU factorization procedure.  For example, discarding the fill-in based on position gives the ILU($k$) method; discarding the fill-in based on the values of the corresponding entries in the $L$ or $U$ factors gives the \emph{threshold} ILU method.  There are also ILU methods for which the fill-in is managed based on a combination of positions and values or based on other dropping strategies.  For details about the different ILU methods, we refer to monographs \cite{Benzi.M2002,Chan.T;Van-der-Vorst.H1997} and Saad~\cite{Saad.Y2003}.  Here we use the notation and terminology from~Saad~\cite{Saad.Y2003}.

We consider ILU($k$) (Algorithm~\ref{alg:ILU_k}), which to our knowledge was originally introduced for reservoir simulations in \cite{Watts-III.J1981}.  The ILU is based on the \emph{level of fill} to determine the off-diagonal positions in the $L$ and $U$ factors where the entries fill-in will not be introduced (or as is often said, off-diagonal positions for which the fill-in entries are \emph{dropped}). Next, we define level of fill, and the detailed algorithm is given in
Algorithm~\ref{alg:ILU_k}.

\begin{definition}[Level of fill]
The initial level of fill of the elements of a sparse matrix $A$ defined as 
$$
\mathcal{L}_{ij} = 0 \ \text{if} \ a_{ij} \neq 0 \ \text{or} \  i=j, \ \text{otherwise} \ \mathcal{L}_{ij} = \infty.
$$
When an entry $a_{ij}$ is updated in the factorization procedure $a_{ij}:= a_{ij} - a_{ik}*a_{kj}$, its level of fill is also updated by
$$
\mathcal{L}_{ij} = \min\{\mathcal{L}_{ij}, \, \mathcal{L}_{ik}+\mathcal{L}_{kj}+1\}.
$$
\end{definition}

\begin{remark} \label{rem:level_of_fill}
Although the level of fill depends on the location of the element, the rationale is that the level of fill should indicate the magnitude of the element.  The higher the level of fill, the smaller the element.  For example, when the level of fill of $a_{ij}$ is $k$ this means that the size is $|a_{ij}|=O(\epsilon ^k)$ for some $\epsilon < 1$.
\end{remark}

\begin{algorithm}[h!]
\caption{ILU($k$) Method} \label{alg:ILU_k}
\begin{enumerate}
\item For all nonzero elements $a_{ij}$, define $\mathcal{L}_{ij} = 0$
\item For $i=1, \cdots n$, Do

          \hskip 0.15in For $m=1, \cdots, i-1$ and $\mathcal{L}_{im} \le k$, Do

            \hskip 0.3in Compute $a_{im} := a_{im}/a_{mm}$
            
            \hskip 0.3in Compute $a_{i*} := a_{i*} - a_{im}a_{m*}$, where $a_{i*}$ is the $i$-th row of $A$
            
            \hskip 0.3in Update the levels of fill of non-zero by $\mathcal{L}_{ij} = \min\{\mathcal{L}_{ij}, \, \mathcal{L}_{ik}+\mathcal{L}_{kj}+1\}$
            
          \hskip 0.15in End 

         \hskip 0.15in Replace any element in row $i$ with $\mathcal{L}_{ij} > k$ by zero

     \noindent    End
\end{enumerate}
\end{algorithm}

\subsection{Algebraic multigrid methods}
As an algebraic variant of MG methods, AMG methods are widely applicable and the focus of current intensive development.  AMG methods have mesh-independent convergence rates and optimal computational complexity for a wide range of problems.

Any AMG method consists of two phases: the SETUP phase and the SOLVE phase.  In the SETUP phase, the intergrid operator $P_l$ is constructed, and the coarse grid matrix is defined as 
$$
A_{l} = P_{l}^TA_{l+1}P_{l},  \quad  l=L-1, \ldots,0,
$$ 
and $A_L = A$. In the SOLVE phase, the smoother $S_l$ and coarse-grid correction are applied recursively as shown in the following general V-cycle MG Algorithm~\ref{alg:MG}.
\begin{algorithm}[htpb]
\caption{V-cycle for solving $A_l u_l = f_l$, with an initial guess $u_l^0$} \label{alg:MG}
\begin{enumerate}
\item {\bf Pre-smoothing:} $u_l^1 = u_l^0 + S_l (f_l - A_l u_l^0)$
\item {\bf Coarse-grid correction:}
	\begin{enumerate}
	\item $f_{l-1} = P_{l-1}^T(f_l - A_l u_l^1)$
	\item If $l = 1$, $e_0 = A_0^{-1} f_0$; else, apply Algorithm~\ref{alg:MG} for $A_{l-1}e_{l-1} = f_{l-1}$ with zero initial guess
	\item $u_l^2 = u_l^1 + P_{l-1}e_{l-1}$
	\end{enumerate} 
\item {\bf Post-smoothing:} $u_l^3 = u_l^2 + S_l^T (f_l - A_l u_l^2)$
\end{enumerate}
\end{algorithm}

In AMG methods, $P_l$ must be constructed based on algebraic principles, which presents certain challenges. The key to fast convergence is the complementary nature of operators $S_l$ and $P_l$. That is, errors not reduced by $S_l$ must be interpolated well by $P_l$.  We choose the classical AMG as the iterative method $S$ in our Algorithm~\ref{alg:barB}. This method constructs the intergrid operator $P_l$ in two steps.  First, the classical C-F splitting method is used, and then the operator $P_l$ is constructed by classical Ruge-St\"uben interpolation; see \cite{Ruge.J;Stuben.K1987} for details.

\subsection{Application of Algorithm~\ref{alg:barB}}
In this paper, we consider SPD coefficient matrices $A$ only.  In this case, ILU($k$) is replaced by an Incomplete Cholesky (IC) factorization. It is easy to see that IC($k$) is SPD, which satisfies the assumption on $B$ in Theorem~\ref{thm:B-is-SPD}.  For AMG methods, we have $A_{l} = P_{l}^TA_{l+1}P_{l}$, $l=L-1,\ldots,0$.  $A$ is SPD and $P_l$ is constructed in the classical AMG method; therefore, it is guaranteed that $A_{l}$ is also SPD for $l=0,1,\ldots,L$.  Hence, it is easy to see that using the standard Gauss-Seidel (GS) smoother on each level gives a convergent iterative method.  According to Algorithm~\ref{alg:MG}, $B_l$, which stands for V-cycle MG on level $l$ ($l=1,2,\ldots,L$), can be defined recursively as
\begin{equation*}
I-B_lA_l = (I-S_l^TA_l)(I-P_{l-1}B_{l-1}P_{l-1}^TA_{l})(I-S_lA_l),
\end{equation*} 
with $B_0 = A_0^{-1}$.  As
$$
\| I- S_l A_l \|_{A_l} < 1 \ \text{and} \
\| I-P_{l-1}B_{l-1}P_{l-1}^TA_{l} \|_{A_{l}} \leq 1,
$$ 
by induction assumption, we can show that 
$$
\| I-B_lA_l \|_{A_l} < 1.
$$ 
Thus the classical AMG is convergent for SPD problems by mathematical induction. Hence, ILU($k$) and the classical AMG can be combined using Algorithm~\ref{alg:barB} to yield the following corollary:
\begin{corollary}[Symmetric Positive Definiteness of ILU-AMG]
If we choose $S$ from the classical AMG methods and~$B$ from ILU($k$) in Algorithm~\ref{alg:barB}, then the operator $B_{\text{co}}$ defined in~\eqref{eqn:barB}~is SPD.
\end{corollary}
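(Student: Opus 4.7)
The plan is to reduce the corollary to a direct application of Theorem~\ref{thm:B-is-SPD}. That theorem requires two ingredients: (i) $S$ is a non-expansive smoother in the $A$-norm, and (ii) $B$ is SPD. So the whole proof amounts to verifying these two hypotheses for the specific choices $S = $ classical AMG and $B = $ ILU($k$) (which, since $A$ is SPD, we take to mean the incomplete Cholesky factorization IC($k$)).

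First I would handle the ILU part. Since $A$ is SPD, the IC($k$) factorization produces $B = (LL^T)^{-1}$ with $L$ a sparse lower-triangular factor having nonzero diagonal, so $B$ is manifestly symmetric; positivity follows since $(Bv,v) = \|L^{-1}v\|^2 > 0$ for $v\neq 0$. This verifies hypothesis (ii). I would keep this step short because it is essentially structural.

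Next I would verify hypothesis (i) for the V-cycle AMG operator. Here I follow the inductive argument already sketched in the paragraph preceding the corollary. The base case $B_0 = A_0^{-1}$ is trivial since $I-B_0A_0 = 0$. For the inductive step, write
\begin{equation*}
I - B_l A_l = (I - S_l^T A_l)(I - P_{l-1} B_{l-1} P_{l-1}^T A_l)(I - S_l A_l),
\end{equation*}
and bound in the $A_l$-norm using submultiplicativity. Because $A_l = P_{l-1}^T A_{l+1} P_{l-1}$ is SPD (this also follows by induction, starting from the SPD matrix $A_L=A$ and using that $P_{l-1}$ has full column rank in the classical Ruge--Stüben construction), Gauss--Seidel on $A_l$ is a convergent smoother, so $\|I - S_l A_l\|_{A_l} < 1$ and $\|I - S_l^T A_l\|_{A_l} < 1$. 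For the middle factor, the inductive hypothesis $\|I - B_{l-1} A_{l-1}\|_{A_{l-1}} < 1$ together with the identity $A_{l-1} = P_{l-1}^T A_l P_{l-1}$ yields $\|I - P_{l-1} B_{l-1} P_{l-1}^T A_l\|_{A_l} \le 1$ (this is the standard $A$-orthogonality argument: the coarse-grid correction is an $A_l$-orthogonal projection-like operator when $B_{l-1} = A_{l-1}^{-1}$, and a non-expansion otherwise). Multiplying the three bounds gives $\|I - B_L A_L\|_A < 1$, so $S = B_L$ is norm-convergent, hence certainly non-expansive.

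Finally I would invoke Theorem~\ref{thm:B-is-SPD} with these choices to conclude that $B_{\text{co}}$ is SPD. The main subtlety I anticipate is the middle-factor bound in the inductive step: one needs to be careful that the recursion really yields $\|I - P_{l-1} B_{l-1} P_{l-1}^T A_l\|_{A_l} \le 1$ rather than a strict inequality, because $B_{l-1}$ is only an approximate inverse of $A_{l-1}$. This is standard in multigrid analysis but is the one place where a clean citation (or a short line of algebra passing between the $A_l$- and $A_{l-1}$-inner products via $P_{l-1}$) is warranted. Everything else is bookkeeping.
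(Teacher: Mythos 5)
Your proposal is correct and follows essentially the same route as the paper: the paper's justification (given in the paragraph preceding the corollary) likewise verifies that IC($k$) yields an SPD operator $B$ and that the classical AMG V-cycle is norm-convergent via the same level-by-level induction on $I-B_lA_l = (I-S_l^TA_l)(I-P_{l-1}B_{l-1}P_{l-1}^TA_l)(I-S_lA_l)$, and then invokes Theorem~\ref{thm:B-is-SPD}. You merely supply slightly more detail than the paper does on the two points it labels ``easy to see,'' namely the positivity of $(LL^T)^{-1}$ and the non-expansiveness of the coarse-grid correction factor.
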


\section{A model problem in reservoir simulation}

Petroleum reservoir simulation provides information about processes that take place within oil reservoirs.  It is, therefore, of great assistance in efforts to achieve optimal recovery.  Modern reservoir simulation faces increasingly complex physical models and uses highly unstructured grids. This results in Jacobian systems that are more difficult at each step.  In this section, we describe a model problem in PRS and a discretization method that has great promise for efficient resource recovery, which is used for numerical tests in the next section.

Let $\Omega$ be a bounded domain in $\mathbb{R}^3$ in our consideration of the following second-order elliptic problem:
\begin{equation}\label{eqn:model}
-\nabla \cdot  (a \nabla p) + cp = f   \quad \text{in } \Omega.
\end{equation}
In reservoir simulation, the model problem~\eqref{eqn:model}~usually occurs after temporal semi-discretization of the mathematical models that describe the multiphase flow in porous media. Such scalar problems also occur in more sophisticated preconditioning techniques for coupled systems of PDEs (see~\cite{Xu.J2010} for further details).  The unknown function $p$ in~\eqref{eqn:model} is the pressure; $a \in [L^{\infty}(\Omega)]^{3 \times 3}$ is the diffusion tensor and it usually depends on the permeability and viscosity, etc.; $c \in L^{\infty}(\Omega)$ is nonnegative and proportional to the inverse of the time step size in an implicit temporal scheme (such as the
Backward Euler method). We assume that $\partial \Omega$ has two
non-overlapping parts, $\Gamma_D$ and $\Gamma_N$, such that
$\overline{\Gamma_D \cup \Gamma_N} = \partial \Omega$. The model
problem~\eqref{eqn:model}~is completed by the boundary conditions $p =
g_D$ on $\Gamma_D$ and $(a\nabla p) \cdot \pmb{n} = g_N$ on
$\Gamma_N$, where $\pmb{n}$ is the outward unit normal vector of
$\partial \Omega$, and where $g_D$ and $g_N$ are given.

The difficulties that arises from~\eqref{eqn:model} in regard to solving the linear system~\eqref{eqn:linear_system} are mainly due to the complex geometry and complicated physical properties of the reservoir, which often result in a heterogeneous diffusion tensor with large jumps, and in distorted, degenerated, and/or non-matching meshes with faults and pinch-outs.  Figure~\ref{fig:mesh} shows an example of the computational domain and the permeability of this reservoir in one horizontal layer.
\begin{figure}[htbp]
\begin{center}
\includegraphics[width=0.45\textwidth]{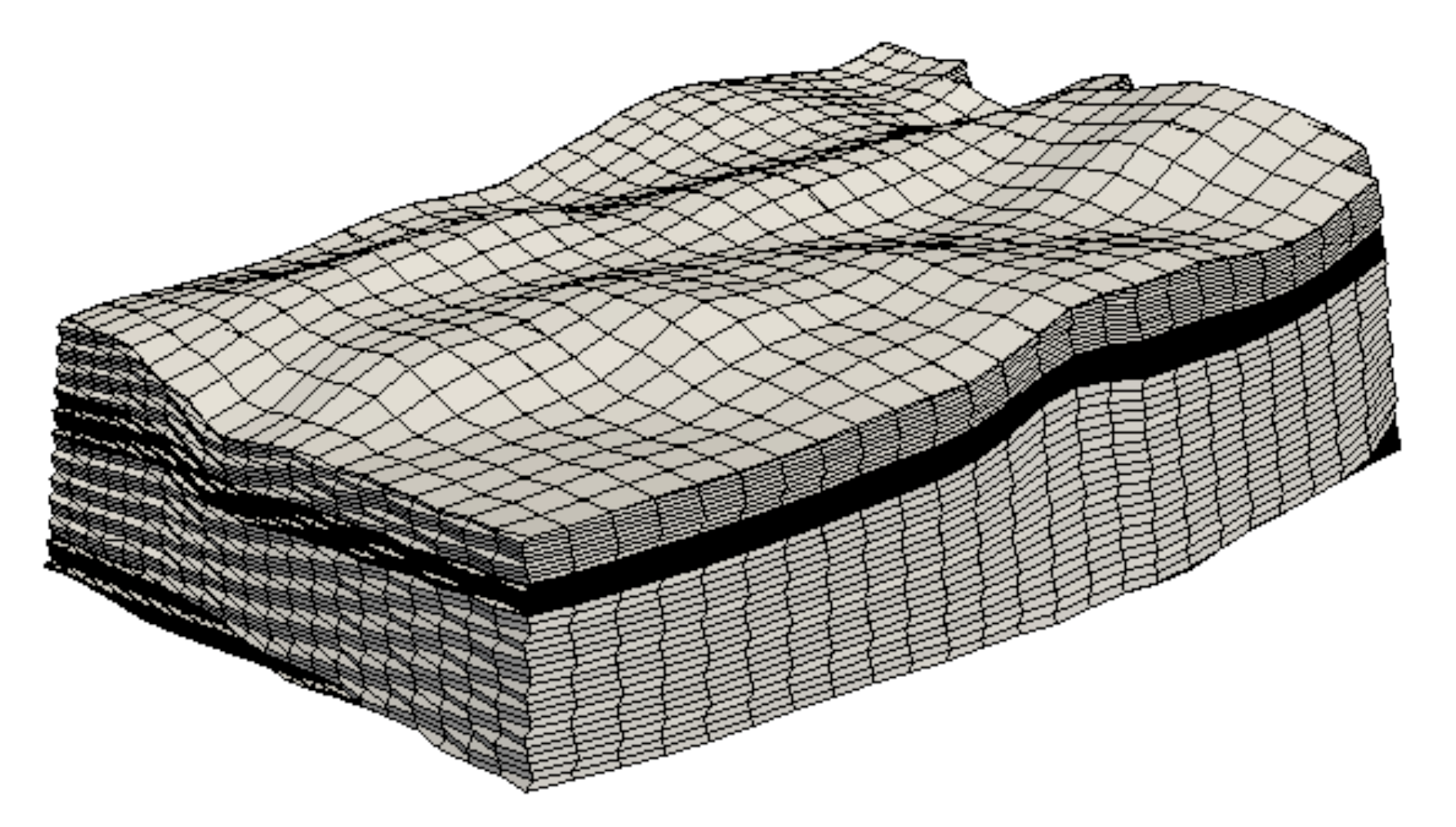} \hskip 0.46in \includegraphics[width=0.45\textwidth]{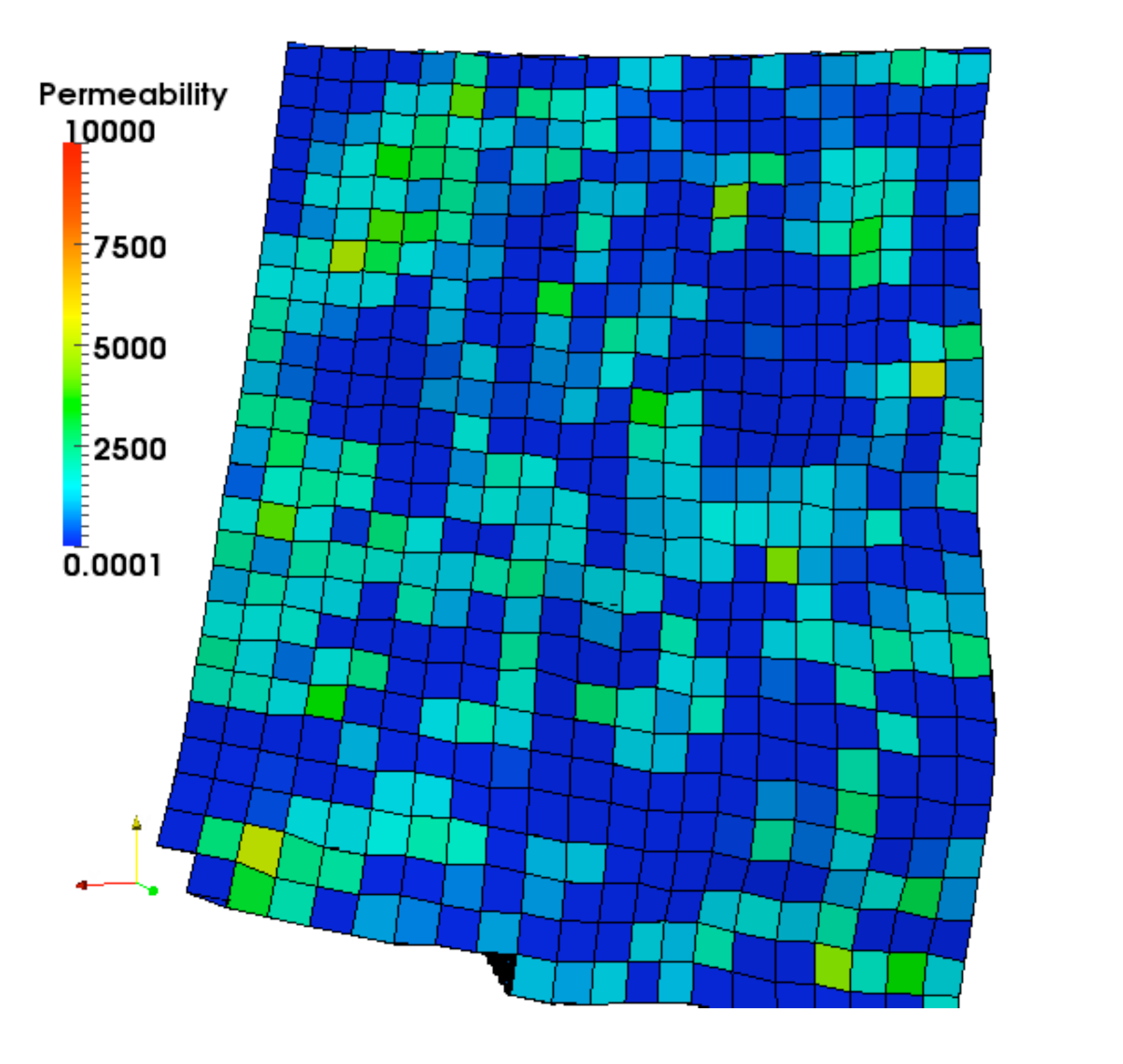} 
\caption{Left: a sample computational domain and mesh; Right: permeabilities of the porous media in one horizontal layer (the magnitude of the variation of permeability is about $10^8$).}
\label{fig:mesh}
\end{center}
\end{figure}

We use the mixed-hybrid finite element method to discretize~\eqref{eqn:model}.  Due to its local conservation property and intrinsic and accurate approximation of the flux $\pmb{u} := -a \nabla p$,  the mixed-hybrid method is preferred in reservoir simulations.  Let $\Omega_h =\{T\}$ be a triangulation of $\Omega$, and let $\Gamma_h$ be the set of boundaries of $T$ in $\Omega_h$ with the decomposition 
$$
\Gamma_h^{\partial} = \{  \Gamma \in \Gamma_h:  \Gamma \in \partial \Omega  \}, \quad \Gamma_h^0 = \Gamma_h \setminus \Gamma_h^{\partial}.
$$ 
We define the finite dimensional spaces as
\begin{align*}
V_h &  := \lbrace \pmb{v} \in [L^2(\Omega)]^d : \pmb{v}\vert_{T} \in V_h(T), \; \forall\, T \in \Omega_h \rbrace
\\ 
Q_h & := \lbrace  q \in L^2(\Omega) : q\vert_{T} \in P_0(T), \; \forall\, T \in \Omega_h \rbrace
\\ 
\Lambda_h & := \lbrace  \mu \in L^2(\Gamma_h): \mu \vert_{\Gamma} \in P_0(\Gamma), \; \forall \, \Gamma \in \Gamma_h^0; \; \mu \vert_{\Gamma} = 0, \; \forall\,\Gamma \in \Gamma_h^{\partial}  \rbrace,
\end{align*}
where $V_h(T)$ is the Kuznetsov-Repin element on the polyhedral elements \cite{Kuznetsov.Y;Repin.S2003}. This element is based on the Raviart-Thomas element on the local conforming tetrahedral partitioning of $T$. $P_k(T)$ denotes the set of polynomials of a degree not greater than $k$, $k\ge 0$. $V_h$, $Q_h$, and $\Lambda_h$ are used to approximate the flux $\pmb{u}$; the pressure $p$ associated with elements in $\Omega_h$; and the Lagrange multipliers $\lambda_h$ are associated with faces in $\Omega_h$. 

The mixed-hybrid finite element formulation for \eqref{eqn:model} can be written as follows: Find $(\pmb{u}_h, p_h, \lambda_h)\in
V_h\times Q_h\times \Lambda_h$, $\pmb{u}_h \cdot \pmb{n} = -g_{N,h}$
on $\Gamma_h$, where $g_{N,h}$ is an appropriate approximation of $g_N$, such that
\begin{align*}
(a^{-1}\pmb{u}_h, \pmb{v}) - \sum_{T}\lbrace  (p_h, \text{div} \ \pmb{v})_{T} - (\lambda_h, \pmb{v} \cdot \pmb{n}_{T})_{\partial T} \rbrace &= - \int_{\Gamma_D} g_D(\pmb{v} \cdot \pmb{n}) \mathrm{d}s,  & &\forall \pmb{v} \in V_h
\\
-\sum_{T}(\text{div} \ \pmb{u}_h, q)_{T} - (cp_h, q) &= - \int_{\Omega} fq \mathrm{d}x, & &\forall q \in Q_h
\\
\sum_{T} (\mu, \pmb{u}_h \cdot \pmb{n}_{T})_{\partial T} &= 0, & &\forall \mu \in \Lambda_h
\end{align*}
where $\pmb{n}_T$ denotes the outward unit normal to $\partial T$. 

\section{Numerical Experiments}\label{sec:numer}
It is easy to see that the resulting discrete linear system of the mixed-hybrid finite element method in the previous section has the following matrix form:
\begin{equation*}
\begin{pmatrix}
\mathsf D & \mathsf  B^T & \mathsf  C^T \\
\mathsf B & - \mathsf M   &  0   \\
\mathsf C & 0     &  0
\end{pmatrix}
\begin{pmatrix}
\widetilde{u} \\
\widetilde{p} \\
\widetilde{\lambda}
\end{pmatrix} =
\begin{pmatrix}
\widetilde{f_1} \\
\widetilde{f_2} \\
0
\end{pmatrix}.
\end{equation*}
Notice that the matrix $\mathsf D$ is block diagonal with each block corresponding to the flux unknowns in one element. Hence, we can easily invert $\mathsf D$ and obtain~\eqref{eqn:linear_system} with 
\begin{equation} \label{eqn:alge_sys}
A = \begin{pmatrix}
\mathsf{BD^{-1}B}^T + \mathsf{M} & \mathsf{BD^{-1}C}^T \\
\mathsf{CD^{-1}B}^T        &  \mathsf{CD^{-1}C}^T
\end{pmatrix}
, \quad
u = 
\begin{pmatrix}
\widetilde{p}\\
\widetilde{\lambda}
\end{pmatrix}, \quad \text{and} \
f = 
\begin{pmatrix}
\mathsf{BD^{-1}}\widetilde{f_1} - \widetilde{f_2} \\
\mathsf{CD^{-1}} \widetilde{f_1}
\end{pmatrix},
\end{equation}
where it is well-known that $A$ is SPD. 

In the numerical experiments, performed on a Dell Precision desktop computer, we solve the above linear system by PCG with different preconditioners.  We pick eight problems from the real petroleum reservoir data. Table~\ref{tab:size} gives the degrees of freedom (DOFs) and the number of non-zeros (NNZ) for the test problems.  Here, the difference between Model 1 (3) and 2 (4) is that the permeabilities in Model 1 (3) are homogeneous whereas the permeabilities in Model 2 (4) have large jumps. Figure~\ref{fig:mesh} shows the computational domain of Models 3 and 4, and the highly heterogenous permeabilities used in Model 4.  Models 5--8 are test problems that are relatively large in size.  The computational domain of Models 5 and 6 are the same, but the physical properties, such as porosity and permeability, are different.  Models 7 and 8 share the same computational domain, but the permeabilities used in Model 7 are from real reservoir data and the permeabilities in Model 8 are artificially adjusted in order to make the resulting linear system very difficult to solve.

\begin{table}[htbp]
\centering
\caption{Degree of freedom (DOF) and number of non-zeros (NNZ) of the model problems.}
\label{tab:size}
\begin{tabular}{|c||c|c|c|c|}
    \hline
         Test Problem Set 1 & Model 1 & Model 2 & Model 3 & Model 4 \\
         \hline
    DOF & 156,036 & 156,036  & 287,553 &   287,553 \\
    \hline
    NNZ & 1,620,356 & 1,620,356  & 3,055,173 & 3,055,173 \\
    \hline
    \hline
    Test Problem Set 2 & Model 5 & Model 6 & Model 7 & Model 8 \\
    \hline
    DOF & 1,291,672 & 1,291,672  & 4,047,283 &   4,047,283 \\
    \hline
    NNZ & 13,930,202 & 13,930,202  & 45,067,789 & 45,067,789 \\
    \hline
\end{tabular}
\end{table}
\begin{table}[ht]
\def\temptablewidth{1\textwidth}
\centering
\caption{Comparison of preconditioners for model problems (stopping criteria: relative residual less than $10^{-10}$). In the table, `---' means the method does not converge within 10000 iterations, \#Iter is the number of iterations, and the unit for CPU time is second.}
\begin{tabular*}{\temptablewidth}{@{\extracolsep{-1.75mm}}|l||r|r|r|r|r|r|r|r|}
    \cline{1-9}
    Test Problem Set 1 &			\multicolumn{2}{c|}{Model 1} &  \multicolumn{2}{c|}{Model 2} &  \multicolumn{2}{c|}{Model 3} &  \multicolumn{2}{c|}{Model 4} \\ \cline{1-9}
   Preconditioner    		               & \#Iter  & CPU     & \#Iter     & CPU   & \#Iter & CPU      & \#Iter & CPU\\
     \cline{1-9}
     
         ILU(0)   				      & 234  & 2.85    & 3034     & 32.91  & 291       & 6.56           & 4939    & 102.13 \\ \cline{1-9}   
       
     AMG(GS)     		      & 135       & 13.16      & 136       & 13.48      & 154 	    & 31.70    & 138      & 25.78  \\ \cline{1-9} %
     
     AMG(ILU($0$)) 	      &  5364   & 631.72   & 1058    & 115.71 & ---   & ---    & 4849      & 1024.21 \\ \cline{1-9}   %
    
     $\widehat{B}$  (Additive)  	     &	   44     &   6.18    & 43          & 5.53   & 44       & 11.40    & 49      & 11.33 \\  \cline{1-9} %
     
     $B_{\text{co}}$ (Multiplicative) &   23     &   4.04    & 25          & 3.92    & 22       & 7.36    & 28      & 7.78 \\ \hline \hline 
     
         Test Problem Set 2 &			\multicolumn{2}{c|}{Model 5} &  \multicolumn{2}{c|}{Model 6} &  \multicolumn{2}{c|}{Model 7} &  \multicolumn{2}{c|}{Model 8} \\ \cline{1-9}
   Preconditioner    		               & \#Iter  & CPU     & \#Iter     & CPU   & \#Iter & CPU      & \#Iter & CPU\\
     \cline{1-9}
     ILU(0)   				      &  ---  &  ---    & ---     & ---  & 838       & 271.46           & 2039    & 646.77 \\ \cline{1-9}   
       
     AMG(GS)     		      &   30     & 36.70      & 30       & 37.07      & 19  & 96.03    &   22    &  112.79  \\ \cline{1-9} %
     
     AMG(ILU($0$)) 	      &  15    & 32.16   &  15  & 31.58  & 11 	    & 96.72    & 11      & 103.52\\ \cline{1-9}   %
    
     $\widehat{B}$  (Additive)  &   32     &   39.72    &     32      & 39.87   & 26       & 100.78    & 25      & 104.29 \\  \cline{1-9} %
     
     $B_{\text{co}}$ (Multiplicative) &   19     &   25.33    & 19          & 25.54    & 14       & 76.92    & 14      & 77.88 \\  \cline{1-9}
     
\end{tabular*}
\label{tab:compare}
\end{table}

%
%
%
%
%
%

Table~\ref{tab:compare}~compares different preconditioners for the model problems. The CPU time listed in Table~\ref{tab:compare}~is the total CPU time which includes both the setup and solver times. We consider only the ILU($0$) preconditioner due to memory-usage considerations.  As we use highly unstructured meshes, the amount of fill-in for obtaining ILU($k$) factorizations is not predictable for $k>0$.  Here, the AMG(GS) refers to the V-cycle MG method with standard GS smoother on each level. The AMG(ILU($0$)) refers to the V-cycle MG method with an ILU($0$) smoother on the finest level and a GS smoother on the other levels.  In both cases,  two cycles are applied in order to define the preconditioner.  $B_{\text{co}}$ is defined in~\eqref{eqn:barB} with one V-cycle MG, a GS smoother on each level serves as the smoother $S$, and ILU($0$) serves as the preconditioner $B$.  $\widehat{B}$ is its additive version defined in~\eqref{eqn:Ba} with the special choices of $J=1$, $W_1 = V$, $\Pi_1 = I$ and $A_1^{-1}$ is substituted by the preconditioner $B$. 

The ILU($0$) preconditioner works well for Models 1 and 3 because these two problems are relatively small in size and their permeabilities are homogeneous.  We can see that the ILU($0$) preconditioner does not work well for problems that are highly heterogeneous (Models 2 and 4).  It is also not efficient for large-size problems (Models 5--8).  Its performance is not predictable, and sometimes it may even break down (Models 5 and 6).  AMG with a standard GS smoother works better than ILU($0$) does, but still requires more than $100$ iterations for Models 1--4.
If we use ILU($0$) as the smoother at the finest level, the resulting AMG preconditioner may not be SPD because ILU($0$) might not converge. From the numerical results, we can see that  for Models 1--4, using AMG with the ILU($0$) smoother does not work as fast as using AMG with the GS smoother does.  The former method may break down due to the lack of an SPD property (Model 3).  However, if we combine AMG and ILU($0$) as in Algorithm~\ref{alg:barB}, the new defined preconditioner $B_{\text{co}}$ with AMG and ILU($0$) gives the best performance in terms of CPU time; and, it performs efficiently and robustly with respect to the problem size and heterogeneity. The additive algorithm $\widehat{B}$ also works efficiently and robustly. The numerical tests confirm our theoretical results and show the efficiency of our new approach for practical reservoir simulation problems.

\

\section{Concluding Remarks}
In this paper, we discussed practical and efficient solvers for large sparse linear systems and their applications in petroleum reservoir simulation. We propose a simple and efficient framework for obtaining a new preconditioner by combining a convergent iterative method (such as an AMG method with a standard smoother) with an existing preconditioner (such as an ILU method).  We used this approach to solve large linear systems arising from petroleum reservoir simulation (with highly heterogeneous media and 3D unstructured grids).  And, our numerical results show that the new preconditioners significantly improve the efficiency of the linear solvers and are robust with respect to the size and heterogeneity of practical problems.  This method has been implemented and employed in real reservoir simulation by ExxonMobil Upstream Company and RIPED, PetroChina.

\section*{Acknowledgement}
The authors wish to thank Dr. Robert Falgout for many helpful discussions.

\bibliographystyle{abbrv}
\bibliography{ILU_AMG_EM}

\end{document}